\newcommand{\Z}{\mathbb{Z}}
\newcommand{\legendre}[2]{\genfrac{(}{)}{}{}{#1}{#2}}
\newtheorem{theorem}{Theorem}
\newtheorem{lemma}{Lemma}
\author{Jing-Jing Huang}
\address{
JJH: Department of Mathematics and Statistics, University of Nevada, Reno,
1664 N. Virginia St., Reno, NV 89557}
\email{jingjingh@unr.edu}
\author{Huixi Li}
\address{
HL: Department of Mathematics and Statistics, University of Nevada, Reno,
1664 N. Virginia St., Reno, NV 89557}
\email{huixil@unr.edu}
\dedicatory{}
\thanks{The first named author is supported by the UNR VPRI startup grant 1201-121-2479}
\keywords{lattice points, character sums, Gauss sums, rational points near the parabola}
\subjclass[2010]{Primary 11J25, Secondary 11P21}
\begin{document}
\title{On two lattice points problems about the parabola}
\begin{abstract}
    We obtain asymptotic formulae with optimal error terms for the number of lattice points under and near a dilation of the standard parabola, the former improving upon an old result of Popov. These results can be regarded as achieving the square root cancellation in the context of the parabola, whereas  its analogues are wide open conjectures for the circle and the hyperbola. We also obtain essentially sharp upper bounds for the  latter lattice points problem associated with the parabola. Our proofs utilize techniques in Fourier analysis, quadratic Gauss sums and character sums. 
\end{abstract}

\maketitle

\section{Introduction}

The Gauss circle problem is one of the celebrated open questions in number theory. It asks for the best possible error term when approximating the number of lattice points inside a dilating circle centered at the origin with its area. More precisely, it is conjectured that for $a\ge1$
\begin{equation}\label{e1.1}
\sum_{0\le x\le a}\left\lfloor\sqrt{a^2-x^2}\right\rfloor=\frac{\pi}4a^2+O_\varepsilon\left(a^{\frac12+\varepsilon}\right).
\end{equation}

A concomitant conjecture for the hyperbola states that
\begin{equation}\label{e1.2}
    \sum_{x\le a^2}\left\lfloor\frac{a^2}{x}\right\rfloor=2a^2\log a+(2\gamma-1)a^2+O_\varepsilon\left(a^{\frac12+\varepsilon}\right),
\end{equation}
which, after normalization, is equivalent to the Dirichlet divisor problem
$$
\sum_{n\le a}d(n)=a\log a+(2\gamma-1)a+O_\varepsilon(a^{\frac14+\varepsilon}).
$$
We note that the expressions on the left sides of \eqref{e1.1} and \eqref{e1.2} represent the number of lattice points in the first quadrant that are under the dilation of the circle $x^2+y^2=1$ and the hyperbola $xy=1$ via the transformation $(x,y)\to\left(\frac{x}a,\frac{y}a\right)$, which are $x^2+y^2=a^2$ and $xy=a^2$, respectively. 

In spite of the very rich literature on the above problems, the conjectures remain well out of reach with the current technology. We only mention that recently Bourgain and Watt \cite{BourgainWatt} have obtained the best known error $O(a^\frac{517}{824})$ for both the conjectures \eqref{e1.1} and \eqref{e1.2}, improving on earlier bound  $O(a^\frac{131}{208})$ due to Huxley \cite{Hux1, Hux2}.

As there are only three types of conics, the ellipse, parabola and hyperbola, one 
can just as easily consider the analogous problem for the parabola. 
Indeed, Popov \cite{Popov} obtained in 1975 the first result in this regard. Note that the dilation of the standard parabola $y=x^2$ under the transformation $(x,y)\to\left(\frac{x}a,\frac{y}a\right)$ is $y=\frac{x^2}{a}$. Popov's result states that for a large real number $a$ and a positive integer $b\ll a$, 
\begin{equation}\label{e1.3}
\sum_{ x \leq b} \left\lfloor \frac{x^2}{a} \right\rfloor
=
\sum_{x \leq b}
\left( \frac{x^2}{a} - \frac{1}{2} \right)
+ 
O \left( a^{\frac{1}{2} + \frac{c}{\log \log a}} \right), 
\end{equation}
where $c$ is a positive constant independent of $a$. Clearly the number of lattice points $(x,y)\in\mathbb{Z}^2$ in the region $0 < x \leq b$, $0 < y \leq \frac{x^2}{a}$ is  equal to $\displaystyle \sum_{ x \leq b} \left\lfloor \frac{x^2}{a} \right\rfloor$.

As Popov points out, the exponent $1/2$ in the error term of \eqref{e1.3} is best possible in general. Nevertheless, our Theorem~\ref{t1} shows that \eqref{e1.3} is subject to further improvement when $b \ll a$ and $a$ is an integer. 

\begin{theorem}\label{t1} For any positive integers $a, b$ with $a\ge3$,
we have 
\[
\sum_{ x \leq b} \left\lfloor \frac{x^2}{a} \right\rfloor
=
\sum_{x \leq b}
\left( \frac{x^2}{a} - \frac{1}{2} \right)
+ 
O \left( \sqrt{a} \log a+ba^{-\frac12+\frac{2+o(1)}{\sqrt{\log a}\log\log a} }\right).
\]
\end{theorem}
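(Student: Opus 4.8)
The plan is to strip off the main term, reduce to a sum of a sawtooth function, and then estimate quadratic exponential sums by completing them into Gauss sums. Write $\psi(t)=t-\lfloor t\rfloor-\tfrac12$ and $e(t)=e^{2\pi i t}$. From $\lfloor t\rfloor=t-\tfrac12-\psi(t)$ the asserted formula is equivalent to the estimate
\[
\sum_{x\le b}\psi\!\Bigl(\frac{x^2}{a}\Bigr)\ll \sqrt a\,\log a+b\,a^{-\frac12+\frac{2+o(1)}{\sqrt{\log a}\,\log\log a}}.
\]
To exploit the oscillation of $\psi$ I would use Vaaler's approximation: for a parameter $H\ge1$ there is a trigonometric polynomial $\psi^*(t)=\sum_{1\le|h|\le H}c_h\,e(ht)$ with $|c_h|\le \tfrac{1}{2\pi|h|}$ and $|\psi(t)-\psi^*(t)|\le\Delta_H(t)$, where $\Delta_H(t)=\tfrac{1}{2H+2}\sum_{|h|\le H}(1-\tfrac{|h|}{H+1})\,e(ht)$ is a nonnegative Fej\'er-type kernel. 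Setting $t=x^2/a$ and summing over $x\le b$ expresses both $\sum_x\psi^*(x^2/a)$ and the majorant $\sum_x\Delta_H(x^2/a)$ as linear combinations of the quadratic exponential sums $T(h):=\sum_{x\le b}e(hx^2/a)$ with $1\le|h|\le H$, plus a diagonal term from $h=0$ whose contribution to the majorant is $\ll b/H$.

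The crux is the estimation of $T(h)$. Put $d=\gcd(h,a)$, $a'=a/d$, $h'=h/d$, so $\gcd(h',a')=1$ and the summand has period $a'$. I would split $[1,b]$ into $\lfloor b/a'\rfloor$ complete blocks plus a tail of length $r<a'$. Each complete block contributes the complete quadratic Gauss sum $G(h',a')=\sum_{x\bmod a'}e(h'x^2/a')$, for which the classical evaluation gives $|G(h',a')|\le\sqrt{2a'}$, with $G=0$ when $a'\equiv2\pmod{4}$; hence the complete blocks contribute $\ll (b/a')\sqrt{a'}=b/\sqrt{a'}$. For the tail I would invoke the sharp, logarithm-free bound for incomplete quadratic Gauss sums, $\sum_{x=1}^{r}e(h'x^2/a')\ll\sqrt{a'}$ uniformly in $r$ (the Fresnel/Cornu-spiral phenomenon, obtainable from Gauss-sum reciprocity rather than from crude completion). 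This yields $|T(h)|\ll b/\sqrt{a'}+\sqrt{a'}$ with $a'=a/\gcd(h,a)$.

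It remains to assemble the pieces with the weights $|c_h|\ll 1/|h|$ and to optimize $H$. Grouping $h$ according to $d=\gcd(h,a)\mid a$ and using $\sqrt{a'}=\sqrt{a/d}$, the tail terms contribute
\[
\sqrt a\sum_{1\le|h|\le H}\frac{1}{|h|\sqrt{\gcd(h,a)}}\ll\sqrt a\,(\log H)\sum_{d\mid a}d^{-3/2}\ll\sqrt a\,\log H,
\]
since $\sum_{d\mid a}d^{-3/2}\le\zeta(3/2)=O(1)$, while the complete blocks together with the diagonal contribute
\[
\frac{b}{\sqrt a}\sum_{1\le|h|\le H}\frac{\sqrt{\gcd(h,a)}}{|h|}+\frac{b}{H}\ll\frac{b}{\sqrt a}\,(\log H)\sum_{d\mid a}d^{-1/2}+\frac bH.
\]
Choosing $H\asymp\sqrt a$ makes $b/H\ll b/\sqrt a$ and $\log H\asymp\log a$, giving the first error term $\sqrt a\,\log a$. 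For the second term the decisive arithmetic input is the maximal order of $\sum_{d\mid a}d^{-1/2}$: taking logarithms and applying the prime number theorem to the extremal primorial values of $a$ gives $\sum_{d\mid a}d^{-1/2}\le\exp\!\bigl(\tfrac{(2+o(1))\sqrt{\log a}}{\log\log a}\bigr)=a^{\frac{2+o(1)}{\sqrt{\log a}\,\log\log a}}$, and the slowly varying factor $\log H\ll\log a$ is absorbed into the $o(1)$. This produces the second error term $b\,a^{-\frac12+\frac{2+o(1)}{\sqrt{\log a}\log\log a}}$. The off-diagonal part of the Vaaler majorant is handled by the same two bounds and is dominated by these terms.

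Two points will require the most care. First is the incomplete Gauss-sum bound $\ll\sqrt{a'}$: a naive completion only gives $\sqrt{a'}\log a'$, and a van der Corput step only $\sqrt{a'}(\log a')^{1/2}$, either of which would inflate the first error term to $\sqrt a(\log a)^{2}$ or $\sqrt a(\log a)^{3/2}$; removing the logarithm entirely is essential, and this is where the explicit evaluation and reciprocity of quadratic Gauss sums enter. Second, and most delicate, is pinning down the precise constant $2$ and the $o(1)$ in the maximal-order estimate for the multiplicative function $\sum_{d\mid a}d^{-1/2}$, since this single quantity fixes the exponent in the theorem; this is a careful but standard application of the prime number theorem on the primorial extremal case. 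Throughout, all estimates must be kept uniform in $b$, with no hypothesis $b\ll a$.
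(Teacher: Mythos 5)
Your proof is correct, and its analytic core coincides with the paper's: both arguments come down to the bound $\sum_{x\le b}e(hx^2/a)\ll b\sqrt{(a,h)}/\sqrt{a}+\sqrt{a/(a,h)}$, obtained by splitting $[1,b]$ into complete periods plus a tail and invoking a logarithm-free bound for incomplete quadratic Gauss sums (the paper cites Korolev's explicit bound $3.9071\sqrt{a}$ for precisely the purpose you describe), followed by grouping $h$ according to $d=(h,a)$ and using the maximal order $\sigma_{-1/2}(a)\le\exp\left((2+o(1))\sqrt{\log a}/\log\log a\right)$ with the stray $\log$ factor absorbed into the $o(1)$. Where you genuinely differ is the opening reduction: the paper does not use Vaaler's approximation at all. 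Since $a$ is an integer, $\{x^2/a\}$ depends only on $x^2\bmod a$, so the authors expand $\sum_{x\le b}\{x^2/a\}$ \emph{exactly} by orthogonality of additive characters modulo $a$, arriving at an identity (their equation (2.1)) in which the frequencies run over all residues $h\bmod a$ with weights $|1-e(-h/a)|^{-1}\ll\|h/a\|^{-1}$; there is no truncation parameter to optimize and no approximation error to majorize. Your route, truncating at $H\asymp\sqrt{a}$ with the Fej\'er majorant supplying the $b/H\ll b/\sqrt{a}$ diagonal cost, is the more standard and more flexible lattice-point technique (its harmonic-analysis step would tolerate non-integer $a$, though the Gauss-sum step would not), at the price of importing Vaaler's theorem; the paper's identity is more elementary, exploiting the integrality of $a$, at the price of summing over all $a$ frequencies, which costs nothing since the weights decay like $1/h$. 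Both of your flagged delicate points are genuine and correspond exactly to the paper's two key citations: the log-free incomplete Gauss sum bound (Korolev) and the Tenenbaum estimate for $\sigma_{-1/2}$; your assembly of the divisor sums ($\sum_{d\mid a}d^{-3/2}=O(1)$ for the first error term, $\sigma_{-1/2}(a)$ for the second) also matches the paper's computation leading to (2.2).
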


Interestingly, when $a=b$ the minimum order of the error is studied by Chamizo and Paster. They prove in \cite[Proposition 5.2]{ChamizoPastor} that
\[ 
\sum_{ x \leq a} 
\left\lfloor \frac{x^2}{a} \right\rfloor
-
\sum_{x \leq a}
\left( 
\frac{x^2}{a}-\frac12\right) = \Omega\left(a^{\frac{1}{2}} \exp\left(\frac{(\sqrt{2} - \varepsilon) (\log a)^{\frac{1}{2}}} {\log \log a}\right)
\right). \]
This shows that Theorem \ref{t1} is surprisingly close to the true order of the error term.

A problem of similar flavor is to estimate the number of lattice points close to the parabola. Let
$$
A(a, b, \delta) 
= \sum_{\substack{x \leq b \\ \left\|\frac{x^2}{a}\right\| \leq \delta}}1.$$
The function $A(a,b,\delta)$ naturally counts the number of rational points $(\frac{x}a,\frac{y}a)$ with $x\le b$ that lie close to $\mathcal{P}:y=x^2, x\in(0,1]$, or equivalently the number of lattice points $(x,y)\in\Z^2$ with $x\le b$ close to $a\mathcal{P}: y=\frac{x^2}a, x\in[1,a]$. Furthermore, if $\delta<1/a$, then all such points counted by $A(a,b,\delta)$ must be forced to lie on the curve, namely $A(a,b,\delta)=A(a,b,0)$ for $\delta\in(0,1/a)$. We obtain the following essentially optimal estimate of $A(a,b,\delta)$.
\begin{theorem}\label{t2} For any positive integers $a, b$ with $a\ge 3$ and $\delta\in(0,\frac12)$, we have
$$A(a,b,\delta)=2\delta b+O\left(  \sqrt{a}\log a+ba^{-\frac12+\frac{2+o(1)}{\sqrt{\log a}\log\log a} }\right).$$
\end{theorem}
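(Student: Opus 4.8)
The plan is to reduce the counting function $A(a,b,\delta)$ to exactly the exponential sums that govern Theorem~\ref{t1}, by means of an elementary identity for the sawtooth function $\psi(t)=t-\lfloor t\rfloor-\tfrac12$. For $\delta\in(0,\tfrac12)$ and any real $t$ one has $\mathbf 1\bigl[\left\|t\right\|\le\delta\bigr]=\lfloor t+\delta\rfloor-\lfloor t-\delta\rfloor$, valid up to a negligible contribution from the boundary cases where $t\pm\delta\in\Z$. Applying this with $t=x^2/a$ and expanding each floor via $\lfloor u\rfloor=u-\tfrac12-\psi(u)$ gives
\[
A(a,b,\delta)=2\delta b-\sum_{x\le b}\psi\!\left(\tfrac{x^2}{a}+\delta\right)+\sum_{x\le b}\psi\!\left(\tfrac{x^2}{a}-\delta\right).
\]
This isolates the main term $2\delta b$ and reduces Theorem~\ref{t2} to an estimate, \emph{uniform in the real shift} $\beta$, for the quadratic sawtooth sum $S(\beta):=\sum_{x\le b}\psi(x^2/a+\beta)$; the claimed error then follows by taking $\beta=\pm\delta$.

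To bound $S(\beta)$, I would insert a truncated Fourier expansion of $\psi$ (Vaaler's polynomial approximation, to control the jump discontinuities at integers) and a suitable truncation parameter $H$, arriving at a main expression of the shape $\sum_{0<|h|\le H}\frac{c_h\,e(h\beta)}{h}\sum_{x\le b}e(hx^2/a)$ together with a smoothing error, where $e(t)=e^{2\pi it}$ and $|c_h|\le 1$. The decisive observation is that the shift $\beta$ enters only through the unimodular factors $e(h\beta)$, so it disappears upon passing to absolute values: $|S(\beta)|$ is dominated by $\sum_{0<|h|\le H}\frac1{|h|}\bigl|\sum_{x\le b}e(hx^2/a)\bigr|$, which is precisely the quantity one must control to prove Theorem~\ref{t1} (the case $\beta=0$). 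Hence the same estimate applies verbatim and independently of $\beta$, giving $S(\beta)=O\bigl(\sqrt a\log a+ba^{-1/2+(2+o(1))/(\sqrt{\log a}\log\log a)}\bigr)$ and therefore Theorem~\ref{t2}.

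The heart of the matter, and the step I expect to be the main obstacle, is the estimation of the incomplete quadratic Gauss sums $\sum_{x\le b}e(hx^2/a)$ followed by the summation over $h$ against the weight $1/|h|$. I would complete each sum to full Gauss sums over a period dividing $a/\gcd(h,a)$, evaluate these by the classical quadratic Gauss sum formula (where the Legendre-symbol and $\sqrt{\cdot}$ factors produce the square-root cancellation), and treat the incomplete tail separately. The generic modes yield the clean $\sqrt a\log a$, the extra logarithm coming from the harmonic weight $\sum_h 1/|h|$. The exotic second error term instead arises from the resonant modes, those $h$ for which $\gcd(h,a)$ is an unusually large divisor of $a$: summing the Gauss-sum sizes against $1/|h|$ over such $h$ reduces to a sum over divisors of $a$ whose maximal order is $\exp\!\bigl((2+o(1))\sqrt{\log a}/\log\log a\bigr)$, i.e.\ precisely the factor $a^{(2+o(1))/(\sqrt{\log a}\log\log a)}$. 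Pinning down this extremal divisor analysis, which is closely related to the lower-order behaviour in the Chamizo--Pastor $\Omega$-result and accounts for the near-optimality asserted after Theorem~\ref{t1}, is the technically demanding part; but it is shared with Theorem~\ref{t1} and requires no new ideas beyond it.
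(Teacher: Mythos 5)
Your reduction is sound in outline and, after the dust settles, it funnels into exactly the same core estimate as the paper, namely the bound \eqref{e2.2} from Section~\ref{s2}:
\[
\sum_{h<a/2}\frac1h\left(\frac{b}{\sqrt a}\sqrt{(a,h)}+\sqrt{\frac{a}{(a,h)}}\right)\ll \sqrt a\log a+ba^{-\frac12+\frac{2+o(1)}{\sqrt{\log a}\log\log a}}.
\]
The difference is the detector. The paper exploits integrality from the outset: $\left\|\frac{x^2}{a}\right\|\le\delta$ if and only if $x^2\equiv j\pmod a$ for some $|j|\le J=\lfloor\delta a\rfloor$, and this congruence is picked up by discrete orthogonality of additive characters mod $a$; the geometric sum $\sum_{|j|\le J}e(-hj/a)\ll\|h/a\|^{-1}$ then plays exactly the role of your harmonic weights $c_h/h$, and the main term $(2J+1)\frac ba=2\delta b+O(b/a)$ falls out of the $h=0$ term. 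Your route --- the floor-difference identity plus a Vaaler-type approximation of the sawtooth, with the shift $\beta=\pm\delta$ eliminated by passing to absolute values --- is a genuine alternative: it is continuous rather than discrete (it would survive even if $a$ were not an integer), and the uniformity-in-$\beta$ observation is correct. One caution about your final paragraph: naive completion of the incomplete Gauss sums loses a factor $\log a$ per sum, which would degrade the first error term to $\sqrt a\log^2a$; the paper gets the log-free input from Korolev's bound $\bigl|\sum_{x=1}^{b}e(hx^2/a)\bigr|<3.9071\sqrt a$ for $(a,h)=1$, $b\le a$. Since you invoke the Theorem~\ref{t1} machinery as a black box this is harmless, but the completion argument you sketch would not by itself deliver $\sqrt a\log a$.

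The one genuine gap is the boundary case, which you dismiss as negligible without justification. The identity $\mathbf 1\bigl[\|t\|\le\delta\bigr]=\lfloor t+\delta\rfloor-\lfloor t-\delta\rfloor$ fails, by exactly $1$, whenever $\{t\}=\delta$, i.e.\ $t-\delta\in\Z$. For $t=x^2/a$ this is not an irrelevant degeneracy: it occurs precisely when $\delta a$ is an integer $j_0$ and $x^2\equiv j_0\pmod a$, and such points have $\|x^2/a\|=\delta$, so they \emph{are} counted by $A(a,b,\delta)$ and your formula undercounts them. Their number is $(b/a+O(1))\cdot\#\{x\bmod a:\,x^2\equiv j_0\}$, and the number of square roots of a fixed residue modulo $a$ can be of order $\sqrt a$ (for $a=p^{2m}$ and $j_0=p^{2m-2}$ there are $2p^{m-1}$ of them), so the omitted quantity can be as large as $\asymp b/\sqrt a+\sqrt a$. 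This does fit inside the stated error term, but only via the nontrivial (if classical) fact that any residue has $O(\sqrt a)$ square roots modulo $a$; that argument must be supplied. Alternatively, the issue can be avoided entirely: since $a\|x^2/a\|$ is a non-negative integer, $A(a,b,\delta)=A(a,b,\delta^*)$ with $\delta^*=(\lfloor\delta a\rfloor+\tfrac12)/a$, and for $\delta^*$ the boundary case $x^2/a-\delta^*\in\Z$ can never occur; moreover $2\delta^*b=2\delta b+O(b/a)$, which is absorbed by the error term. With that patch, and with \eqref{e2.2} quoted rather than rederived by completion, your proof goes through.
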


When $b\asymp a$, Theorem~\ref{t1} and Theorem~\ref{t2} can be regarded as achieving the square root cancellation for the two lattice points problems on the parabola,  which are best possible in general. Nonetheless, if we are only concerned with upper bounds, better results are available, which, in some cases, can even beat the square root cancellation. 
Denote \[
A(a, \delta) = A(a, a, \delta) 
= \sum_{\substack{x \leq a \\ \left\| \frac{x^2}{a} \right\| \le \delta }} 1. 
\]

The first author recently proved the following result\footnote{The result there has $r^{1+\varepsilon}$ instead of $\sigma(r)$ as quoted here. But it is straightforward to deduce the latter from the proof of \cite{Huang}.}. 
\begin{theorem}[{\cite[Theorem 4]{Huang}}]\label{t0}
Let $a$ be a positive integer, $r$ be the largest integer such that $r^2|a$ and $\delta\in(0,\frac12)$. Then for any $\epsilon > 0$, we have 
\[
A(a,\delta) \ll \delta a + \sigma(r) + \delta^{\frac{1}{2}} a^{\frac{11}{16} + \epsilon}, 
\]
where the implied constant only depends on $\epsilon$. 
\end{theorem}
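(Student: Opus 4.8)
The plan is to pass to the arithmetic dual formulation of the counting problem and then extract cancellation from short character sums via Burgess's inequality, where the exponent $\frac{11}{16}$ will appear. Since $\delta<\frac12$, set $H=\lfloor\delta a\rfloor<\frac a2$; then for $1\le x\le a$ the condition $\|x^2/a\|\le\delta$ is equivalent to $x^2\equiv h\pmod a$ for a unique integer $h$ with $|h|\le H$. Hence, writing $\rho(h):=\#\{x\bmod a:x^2\equiv h\pmod a\}$, we have
\[
A(a,\delta)=\sum_{|h|\le H}\rho(h).
\]
I would first treat $a$ odd and squarefree, which is the clean core, and then handle the square part, which is what produces the term $\sigma(r)$.

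For $a$ squarefree the local solution counts give $\rho(h)=\prod_{p\mid a}\bigl(1+\legendre hp\bigr)=\sum_{d\mid a}\legendre hd$, valid for \emph{every} $h$ (the Jacobi symbol $\legendre hd$ vanishing when $\gcd(h,d)>1$). The term $d=1$ supplies the clean main term $\sum_{|h|\le H}1=2H+1=2\delta a+O(1)$, with no divisor loss whatsoever. For each $d\mid a$ with $d>1$, $\legendre\cdot d$ is a primitive real character modulo $d$, and Burgess's inequality with parameter $r=2$ yields
\[
\Bigl|\sum_{|h|\le H}\legendre hd\Bigr|\ll_\epsilon H^{1/2}d^{3/16+\epsilon}.
\]
Summing over the $\ll a^\epsilon$ divisors of $a$ gives $\sum_{d\mid a,\,d>1}H^{1/2}d^{3/16+\epsilon}\ll H^{1/2}a^{3/16+\epsilon}=\delta^{1/2}a^{11/16+\epsilon}$, since $\frac12+\frac3{16}=\frac{11}{16}$; note the divisor count $\tau(a)$ is absorbed into the exponent $\frac{11}{16}+\epsilon$ rather than into the main term. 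This already proves the theorem when $a$ is squarefree, where $r=1$ and $\sigma(r)=1$.

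For general $a=\prod_p p^{e_p}$ with $r=\prod_p p^{\lfloor e_p/2\rfloor}$, the local factors obey exact identities of the shape $\rho_{p^e}(h)=1+\legendre hp+R_{p^e}(h)$, where the correction $R_{p^e}$ is supported on $p\mid h$ and records the higher square structure; for example $R_{p^2}(h)=-\mathbf 1[p\mid h]+p\,\mathbf 1[p^2\mid h]$. Multiplying over $p\mid a$ and expanding, the fully trivial term again contributes $2\delta a+O(1)$, while every term carrying at least one Legendre symbol reduces — after factoring out the square divisor $t\mid r$ dividing $h$, i.e. $h=t^2h'$ — to a short character sum to a squarefree modulus of length $\asymp H/t^2$; here the growth factor $t$ coming from the square part is exactly cancelled by the shortened length, so Burgess again returns $\ll H^{1/2}d^{3/16+\epsilon}$ and hence the same $\delta^{1/2}a^{11/16+\epsilon}$. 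Finally the purely arithmetic corrections telescope: upon summing over $|h|\le H$ the contributions proportional to $H$ cancel (as in the $p^2$ computation, where $-2H/p+p\cdot 2H/p^2=O(p)$), leaving a residual governed by the square part. I would organize this residual as $\sum_{t\mid r}t=\sigma(r)$, which is consistent with the extremal value $\rho(0)=r$.

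The squarefree core is essentially immediate once Burgess with $r=2$ is invoked, so the hard part is the general case: establishing the exact local identities $\rho_{p^e}=1+\legendre\cdot p+R_{p^e}$, controlling the combinatorics of their product over $p\mid a$, and verifying \emph{both} the telescoping cancellation that collapses the arithmetic corrections to $\sigma(r)$ \emph{and} the compensation of the square-part growth factors in the Burgess step. One must also dispose of the endpoint bookkeeping — the prime $2$, and the range in which $H^{1/2}d^{3/16}$ exceeds the trivial bound (where Burgess is still a valid upper bound, so no harm results) — while keeping the main term $\delta a$ free of any $a^\epsilon$ loss, which is the feature that forces all divisor sums into the secondary terms $\sigma(r)$ and $\delta^{1/2}a^{11/16+\epsilon}$.
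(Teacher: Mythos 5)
Your proposal is correct in substance but takes a genuinely different route from the proof behind the quoted theorem. The paper's approach (carried out for Theorem~\ref{t3} here, and in \cite{Huang} with Burgess replacing P\'olya--Vinogradov) detects $\|x^2/a\|\le\delta$ by additive characters (Fej\'er kernel), reduces to complete quadratic Gauss sums, evaluates them exactly via Lemma~\ref{l2}, and only then confronts a multiplicative character sum in $h$. You instead dualize immediately and arithmetically: $A(a,\delta)=\sum_{|h|\le H}\rho(h)$ with $\rho(h)=\#\{x\bmod a:\ x^2\equiv h\}$, expand $\rho$ multiplicatively into Jacobi symbols, and apply Burgess with $r=2$ to the short sums $\sum_{|h|\le H}\legendre{h}{d}$, getting $H^{1/2}d^{3/16+\epsilon}$ and hence $\delta^{1/2}a^{11/16+\epsilon}$ --- the same ultimate source of the exponent $\tfrac{11}{16}$ as in the paper's route. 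Your squarefree case is complete and arguably cleaner than the Fourier route: the $d=1$ term gives the main term $2H+1$ with no divisor or logarithmic loss, which is exactly what the statement requires. What the Gauss-sum route buys is uniformity at prime powers: Lemma~\ref{l2} covers every modulus $a_1\mid a$ (including powers of $2$) in one stroke, and the square/non-square dichotomy of Lemma~\ref{l4} produces the term $\sigma(r)$ with no combinatorics, via the identity $\sum_{a_1\mid a,\ a_1=\square}\sqrt{a_1}=\sum_{t\mid r}t=\sigma(r)$; in your approach all of this must be rebuilt by hand through the local identities $\rho_{p^e}=1+\legendre{\cdot}{p}+R_{p^e}$.

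Two caveats on the non-squarefree case, which you rightly flag as the hard part. First, your key structural claims do check out and are worth making explicit: $R_q\equiv 0$ for primes $q$ exactly dividing $a$ (so corrections are confined to primes dividing $r$, keeping combinatorial factors at $C^{\omega(r)}$ rather than $C^{\omega(a)}$); the mean-zero property $\sum_{h\bmod q^e}R_{q^e}(h)=0$, which forces the exact cancellation of all contributions proportional to $H$; and the exact compensation $q^{k/2}\bigl(H/q^k\bigr)^{1/2}=H^{1/2}$ in the Burgess step. Second, however, as sketched your residual is not $\sigma(r)$: the boundary errors $O(1)$ in counts like $\#\{|h|\le H:\ q^k\mid h\}=2H/q^k+O(1)$ get multiplied by coefficients of size $q^{k/2}$ and compound across primes, yielding $\prod_{q\mid r}O\bigl(\sum_{k\le e_q}q^{k/2}\bigr)\ll C^{\omega(r)}r\ll_\epsilon r^{1+\epsilon}$. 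That recovers the original form of Huang's theorem (which, per the paper's footnote, had $r^{1+\epsilon}$ in place of $\sigma(r)$), but the refined form quoted here needs the per-prime constants to be exactly $1$; this requires the cleaner organization you only gesture at --- summing over square divisors $t^2\mid h$, $t\mid r$, with exact counting --- rather than prime-by-prime inclusion-exclusion with unspecified constants.
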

Here $\sigma(r)= \sum_{d|r}d$. It is well known that $\sigma(r)\ll r\log\log 3r$, see \cite[\S I.5.5 Theorem 5]{Ten}.  Theorem~\ref{t0} has been used to solve some questions regarding metric Diophantine approximation on the parabola \cite{Huang}.   

Assuming lattice points are randomly distributed in the $\delta$ neighborhood of the dilation $a\mathcal{P}$, we expect roughly $2\delta a$ such points. 
Also, let $a=r^2s$ with $s$ squarefree, then we observe that there are exactly $r$ lattice points lying on $a\mathcal{P}$, namely $(rsl, sl^2)$, $l=1, 2, \ldots, r$. So even with $\delta\to0$, the upper bound in Theorem~\ref{t0} cannot be less than $r$ since those $r$ lattice points on $a\mathcal{P}$ are always counted by $A(a,\delta)$ for any $\delta>0$.   Naturally, the second term $\sigma(r)$ accounts for this phenomenon, and indeed it is not very far from $r$ as noted above. Therefore neither of these two terms can be dispensable, nor can they be improved much.  However, the third term is most likely not optimal and should be susceptible to further improvement. In fact, assuming the Generalized Lindel\"{o}f Hypothesis \cite[\S12.4]{IK}, Theorem \ref{t0} can be improved to
$$
A(a,\delta) \ll \delta a + \sigma(r) + \delta^{\frac{1}{2}} a^{\frac{1}{2} + \epsilon}.
$$

It is easily seen that the term $\delta^{\frac{1}{2}}a^{\frac{11}{16} + \epsilon}$ is less than the heuristic main term $\delta a$ only when  $\delta\ge a^{-\frac58+\varepsilon}$, for some $\varepsilon>0$. We are able to improve the bound in Theorem~\ref{t0} when $\delta\ll a^{-\frac58}$. 
In fact, our Theorem~\ref{t3} below is sharp up to an $a^\varepsilon$ loss in the main term.

\begin{theorem}\label{t3} Let $a$ be a positive integer, $r$ be the largest integer such that $r^2|a$ and $\delta\in(0,\frac12)$. Then 
$$A(a,\delta)\ll \delta a(\log a) d(a) + \sigma(r).$$
\end{theorem}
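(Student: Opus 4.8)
The plan is to turn the counting function into an additive character sum and then evaluate the resulting quadratic Gauss sums explicitly, reducing everything to incomplete Jacobi-symbol sums. Write $e(t)=e^{2\pi i t}$ and $\rho_a(j)=\#\{x\bmod a:\ x^2\equiv j\pmod a\}$. Since $\delta<\tfrac12$, the residues $j$ with $\|j/a\|\le\delta$ are exactly the integers $|j|\le\delta a$, so $A(a,\delta)=\sum_{|j|\le\delta a}\rho_a(j)$. Detecting the congruence by additive characters gives
$$A(a,\delta)=\frac1a\sum_{h\bmod a}G(h;a)\,K(h),\qquad G(h;a)=\sum_{x\bmod a}e\!\left(\frac{hx^2}a\right),\quad K(h)=\sum_{|j|\le\delta a}e\!\left(-\frac{hj}a\right),$$
where $K$ is a Dirichlet kernel with $|K(h)|\ll\min(\delta a,\ \|h/a\|^{-1})$ for $h\neq0$. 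I would \emph{not} bound this by the triangle inequality: estimating $|G(h;a)|\ll\sqrt{a\gcd(h,a)}$ term by term discards the cancellation in the $j$-aspect and yields only $\sqrt a(\log a)d(a)$, which is too weak in the small-$\delta$ regime that Theorem~\ref{t3} is meant to address.

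Instead, I would group $h$ according to $g=\gcd(h,a)$ and use the classical evaluation of quadratic Gauss sums: $G(h;a)=g\,G(h_1;a/g)$ with $\gcd(h_1,a/g)=1$, and $G(h_1;a/g)=\varepsilon\,\legendre{h_1}{n}\sqrt{a/g}$, where $n$ is the squarefree kernel of $a/g$ and $\varepsilon$ is an explicit fourth root of unity (with the prime $2$ needing the usual separate treatment). Interchanging the order of summation, the contribution of each $g$ collapses, by Gauss-sum duality, to an incomplete quadratic character sum $\sum_{|j|\le\delta a}\legendre{j}{n}$ when $a/g$ is not a perfect square, and to a sum of Ramanujan sums $\sum_{|j|\le\delta a}c_{a/g}(j)$ when $a/g$ is a perfect square. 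This is where Fourier analysis, Gauss sums and character sums come together.

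The perfect-square moduli are precisely $a/g=m^2$ with $m\mid r$, and they carry the principal, non-oscillating part. The term $m=1$ produces the main term $2\lfloor\delta a\rfloor+1\ll\delta a$. For $m>1$ the Ramanujan sum has mean zero, so the $\delta a$-sized contribution cancels; what survives is the on-curve count, namely $\sum_{m\mid r}\phi(m)=r$ coming from $j=0$, together with a boundary term that one bounds by $O(\sigma(r))$. This is exactly the source of the $\sigma(r)$ in the statement. For the non-principal moduli I would bound each incomplete character sum by $\min(\delta a,\ \sqrt n\log n)$, combining the trivial estimate with P\'olya--Vinogradov, and then sum over the at most $d(a)$ relevant values of $g$. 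The crucial point is that the P\'olya--Vinogradov bound reaches the $\sqrt a$ scale only when $\delta a\gtrsim\sqrt a\log a$, a range in which it is already dominated by $\delta a$; hence no spurious $\sqrt a$ term appears and the total non-principal contribution is $\ll\delta a(\log a)d(a)$.

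The main obstacle is the bookkeeping in the two structural steps. First, one must carry out the Gauss-sum evaluation uniformly across all prime powers, in particular at $p=2$ and for non-squarefree $a/g$, where $\legendre{\cdot}{n}$ is imprimitive and several $g$ induce the same primitive character. Second, one must isolate the principal part cleanly enough that the Ramanujan-sum (boundary) contribution evaluates to $O(\sigma(r))$ rather than something larger. Finally, combining the trivial and P\'olya--Vinogradov bounds over the full range of divisors, so that the error never exceeds $\delta a(\log a)d(a)$, is the delicate quantitative heart of the argument.
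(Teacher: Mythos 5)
Your plan is correct in its essential ideas, but it is a genuinely different route from the paper's, so a comparison is in order. The paper never works with your exact identity $A(a,\delta)=\frac1a\sum_{h\bmod a}G(h;a)K(h)$. Instead it majorizes the indicator of $\|x^2/a\|\le\delta$ by the Fej\'er kernel $\mathcal{F}_H$ with $H=\lfloor 1/(2\delta)\rfloor$, which truncates the frequency variable at $|h|\le H\asymp 1/\delta$ from the outset; then, after grouping by $d=(h,a)$ and invoking the same explicit Gauss-sum evaluation you cite, it applies P\'olya--Vinogradov to the \emph{incomplete sum in the $h$-aspect}, $\sum_{h_1\le H/d}\legendre{h_1}{a_1}$ (or $\legendre{a_1}{h_1}$ times a character mod $4$), followed by partial summation; square values of $a_1=a/d$ are handled by the trivial bound $|S(h_1,a_1)|=\sqrt{a_1}$, which yields exactly $\sum_{m\mid r}m\le\sigma(r)$. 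You instead keep the $h$-sum complete, evaluate it by Gauss-sum duality, and apply P\'olya--Vinogradov to the \emph{incomplete sum in the $j$-aspect}, $\sum_{|j|\le\delta a}\legendre{j}{n}$, with the square moduli surfacing as Ramanujan sums. Your route buys an exact identity (the same starting point as the paper's proof of Theorem~2, which you then exploit rather than bound by absolute values) and, if carried out, a first error term of size $\delta a\,d(a)$ with no logarithm; the paper's route buys far lighter bookkeeping, since the Fej\'er truncation means every character that appears is non-principal of modulus at most $4a_1$, and there is no imprimitivity analysis, no twisted Gauss sums $\sum_{h\bmod a_1}\chi(h)e(-hj/a_1)$ for induced $\chi$, and no boundary terms in the square case.

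One quantitative point in your sketch needs repair, and you half-flagged it yourself: the Ramanujan-sum boundary term is \emph{not} $O(\sigma(r))$ by any termwise argument. Writing $\sum_{|j|\le\delta a}c_{m^2}(j)=2\lfloor\delta a\rfloor[m{=}1]+\phi(m^2)+E_m$ and bounding each $E_m$ by the size of its largest divisor terms produces quantities of the shape $\sum_{m\mid r}\sigma(m)$, which for $r$ squarefree with many prime factors is of order $\sigma(r)\log\log r$, exceeding the target. The correct move is to send the boundary error to the \emph{other} term of the theorem: one has $E_m\ll \delta a\,2^{\omega(m)}/m$, so $\sum_{m\mid r}\frac1m|E_m|\cdot m\ll\delta a\sum_{m\mid r}2^{\omega(m)}/m\ll\delta a(\log\log 3a)^2\ll\delta a\log a$, which is absorbed by $\delta a(\log a)d(a)$, while the $j=0$ diagonal contributes exactly $\sum_{m\mid r}\phi(m)=r\le\sigma(r)$, as you correctly identified. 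With that rerouting, plus the (routine but genuine) treatment of the prime $2$ and of imprimitive moduli $a_1=nm^2$, where the dual Gauss sum imposes divisibility conditions $j\equiv0\pmod{m^2/e}$ before P\'olya--Vinogradov can be applied to $\legendre{\cdot}{n}$, your argument does deliver the theorem.
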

Here $d(a)= \sum_{d|a}1$ is the divisor function.
It is well known that $d(a)\le a^{(1+o(1))\frac{\log2}{\log\log a}}\ll_\varepsilon a^{\varepsilon}$, see \cite[\S I.5.2 Theorem 2]{Ten}.

It is worth noting that the core of the proofs of Theorem~\ref{t0} and Theorem~\ref{t3} utilize  estimates of character sums, after applying some elementary Fourier analysis and classical results about Gauss sums. The difference is that Burgess's bound \cite{Burgess} is used for Theorem~\ref{t0} while the P\'{o}lya-Vinogradov inequality (Lemma~\ref{l3}) is used for Theorem~\ref{t3}, when estimating such character sums. Since the latter can be improved under the Generalized Riemann Hypothesis, we also have the following theorem. 

\begin{theorem}\label{t4}
Under the assumption of the Generalized Riemann Hypothesis and the same conditions of Theorem~\ref{t3}, we have
$$
A(a,\delta)\ll \delta a(\log \log 3a)d(a) + \sigma(r).
$$
\end{theorem}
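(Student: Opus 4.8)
The plan is to mirror the proof of Theorem~\ref{t3} exactly, replacing the single application of the P\'{o}lya--Vinogradov inequality (Lemma~\ref{l3}) by its conditional refinement under the Generalized Riemann Hypothesis. Since the paper indicates that the proofs of Theorem~\ref{t3} and Theorem~\ref{t4} are identical apart from which character-sum bound is invoked, I would first carry the argument up to the point where the count $A(a,\delta)$ has been reduced, via elementary Fourier analysis together with the classical evaluation of quadratic Gauss sums, to a main term of size $\delta a$ times a combinatorial factor plus an error expressed as a sum of incomplete character sums of the form $\sum_{x\le N}\legendre{x}{q}$ over various moduli $q$ dividing $a$. The term $\sigma(r)$ accounts, as explained in the text, for the lattice points lying exactly on $a\mathcal{P}$ and is treated identically in both theorems.

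Next, at the stage where Theorem~\ref{t3} estimates each incomplete character sum by $O(\sqrt{q}\log q)$ using P\'{o}lya--Vinogradov, I would instead invoke the GRH-conditional bound. The relevant input is that, under GRH, for a non-principal character $\chi$ modulo $q$ one has $\left|\sum_{n\le N}\chi(n)\right|\ll \sqrt{q}\,\log\log q$ (the Montgomery--Vaughan refinement of P\'{o}lya--Vinogradov). Substituting this wherever the unconditional $\sqrt{q}\log q$ was used replaces every logarithmic factor $\log a$ arising from these character sums by $\log\log 3a$. Tracking this substitution through the same summation over the divisors of $a$ that produced the factor $(\log a)d(a)$ in Theorem~\ref{t3}, the $\log a$ is downgraded to $\log\log 3a$ while the divisor-sum bookkeeping that generates $d(a)$ is unaffected, since it arises from the number of moduli rather than from the size of each character sum. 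This yields precisely the claimed bound $\delta a(\log\log 3a)d(a)+\sigma(r)$.

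The only genuine point requiring care, rather than routine substitution, is to confirm that the conditional character-sum bound applies uniformly to every character appearing in the decomposition. The characters produced by the Gauss-sum evaluation are the quadratic (Legendre--Jacobi) characters attached to divisors of $a$, and one must ensure that each is non-principal wherever the strong bound is needed; the principal-character contributions must be separated out beforehand, as they are already subsumed into the main term $\delta a$. Once this separation is in place, applying the GRH bound to each non-principal quadratic character and summing is mechanical. I expect this bookkeeping to be the main, though modest, obstacle, as everything else transfers verbatim from the proof of Theorem~\ref{t3}.
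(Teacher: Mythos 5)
Your proposal matches the paper's proof exactly: the paper establishes Theorem~\ref{t4} by rerunning the proof of Theorem~\ref{t3} verbatim and replacing the P\'{o}lya--Vinogradov inequality (Lemma~\ref{l3}) with the Montgomery--Vaughan GRH bound (Lemma~\ref{l5}), which is precisely your plan, including the observation that the divisor bookkeeping producing $d(a)$ and the $\sigma(r)$ term are untouched by the substitution. One minor correction to your last paragraph: the principal-character cases (those divisors $a_1=a/d$ that are perfect squares, where the Jacobi symbol gives no cancellation) are bounded trivially and absorbed into $\sigma(r)$, not into the main term $\delta a$, which comes solely from $h=0$; this does not affect the validity of your argument.
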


We only remark in passing that the problem of obtaining upper and lower bounds for the number of lattice/rational points near a manifold has become a very active area of research, see \cite{B, BDV, Beresnevich, H1, H2, H3, H4, VV} and the references therein for the background and recent progress. 

Throughout the paper, we will use the notation $e(x)=e^{2\pi i x}$, $\|x\|=\min_{n\in\mathbb{Z}}|x-n|$, $\{x\}=x-\lfloor x\rfloor$ and Vinogradov's symbol $f(x)\ll g(x)$ and Landau's symbol $f(x)=O(g(x))$ to mean there exists a constant $C$ such that $|f(x)|\le Cg(x)$. The symbol $f(x) = o(g(x))$ means $\lim_{x \to \infty} \frac{f(x)}{g(x)} = 0$, and $f(x) = \Omega(g(x))$ means the negation of $f(x) = o(g(x))$. 

\section{The proof of Theorem~\ref{t1}}\label{s2}
We start by observing that, in view of the orthogonality of additive characters
\[ 
\frac{1}{a} \sum_{h \bmod a} e \left( \frac{h d}{a} \right)
= \begin{cases} 
     1, &\textrm{if} \;\; a \mid d, \\
     0, & \text{otherwise, } 
   \end{cases}
\]
we have
\begin{align*}
\sum_{x \leq b}
\left\{ \frac{x^2}{a} \right\}
=& 
\sum_{x \le b}
\sum_{\substack{0 \leq j \leq a - 1\\x^2\equiv j\bmod{a}}}
\frac{j}{a} \\
=& 
\sum_{0 \leq j \leq a - 1}
\sum_{x \le b}
\sum_{h \bmod{a}}
\frac{1}{a}
e\left( h \frac{x^2 - j}{a} \right)
\frac{j}{a} \\
=& 
\frac{1}{a^2}
\sum_{h \bmod{a}}
\sum_{0 \leq j \leq a - 1}
j e\left( - \frac{h j}{a} \right)
\sum_{x \leq b}
e\left( h \frac{x^2}{a} \right). 
\end{align*}

It follows by an elementary calculation that 
\[  \sum_{0 \leq j \leq a - 1} j e\left( - \frac{h j}{a} \right)
= \begin{cases} 
      \frac{-a}{1 - e\left( - \frac{h}{a} \right) }, & h \neq 0, \\
      \frac{(a - 1)a}{2}, & h = 0. 
   \end{cases}
\]
Therefore, we have 
\begin{equation}\label{e2.1}
\sum_{x \leq b}
\left\{ \frac{x^2}{a} \right\}
= \frac{1}{a^2} \frac{(a - 1) a}{2} b
+ \frac{1}{a}
\sum_{1 \leq h \leq a - 1} 
\frac{-1}{1 - e\left( - \frac{h}{a} \right)} S(h, a, b), 
\end{equation}
where $S(h, a, b) = \displaystyle \sum_{x \leq b} e \left( \frac{h x^2}{a} \right)$ is an incomplete Gauss sum. 

Note that
\[
\left|\frac{1}{1 - e\left( - \frac{h}{a} \right)}\right|
=
\frac{1}{2\sin \left( \frac{h}{a} \pi \right) }\le \frac1{4\|\frac{h}a\|}.
\] 
Now we are poised to estimate the Gauss sum $S(h, a, b)$. To that end, we quote the following result of Korolev. 
\begin{lemma}[{\cite[Corollary, Page 53]{Korolev}}]
Let $a$, $b$ and $h$ be integers such that $1 \leq b \leq a$ and $(a, h) = 1$. Then
\[
\left| \sum_{x = 1}^b e\left(\frac{h x^2 }a\right) \right|
< 3.9071 \sqrt{a}. 
\]
\end{lemma}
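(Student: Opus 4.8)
The plan is to bound the incomplete Gauss sum $S(h,a,b)=\sum_{1\le x\le b}e(hx^2/a)$ by first reducing it to \emph{complete} Gauss sums, whose modulus is known exactly, and then removing the resulting logarithmic loss by an exact transformation (reciprocity) argument. Since $1\le b\le a$, the integers $x=1,\dots,b$ occupy distinct residues modulo $a$, so I would detect the cutoff using the finite Fourier expansion of the indicator,
\[
\mathbf{1}_{1\le x\le b}=\frac1a\sum_{t\bmod a}c_t\,e\!\left(\frac{tx}{a}\right),\qquad c_t=\sum_{1\le y\le b}e\!\left(-\frac{ty}{a}\right),
\]
and after substituting and interchanging the order of summation obtain
\[
S(h,a,b)=\frac1a\sum_{t\bmod a}c_t\sum_{x\bmod a}e\!\left(\frac{hx^2+tx}{a}\right).
\]
Completing the square (using $(h,a)=1$), each inner complete quadratic Gauss sum has modulus exactly $\sqrt a$ when $a$ is odd and at most $\sqrt{2a}$ in general.

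First I would record what this already yields. The coefficients form a Dirichlet kernel, $|c_t|=\bigl|\sin(\pi tb/a)/\sin(\pi t/a)\bigr|$ for $t\ne 0$, whose $\ell^1$-norm over $t\bmod a$ is $\ll a\log a$ and is genuinely of this size when $b\asymp a$. Hence completion alone gives only $|S(h,a,b)|\ll\sqrt a\,\log a$; the Cauchy--Schwarz alternative via Parseval ($\sum_t|c_t|^2=ab$) is even weaker, producing $\sqrt{ab}$. So the completion method correctly locates the order $\sqrt a$ but is intrinsically off by a logarithm and cannot see the sharp constant. (This logarithm is not harmless for the present paper: the easy bound $\sqrt a\log a$, once inserted into \eqref{e2.1} and summed against $1/\|h/a\|$, would cost an extra logarithm and degrade the first error term of Theorem~\ref{t1} from $\sqrt a\log a$ to $\sqrt a(\log a)^2$.)

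To remove the logarithm and pin down an explicit constant, I would instead use an exact transformation formula for the incomplete theta sum, namely a van der Corput $B$-process of Landsberg--Schaar type. Applying Poisson summation to $\sum_{1\le x\le b}e(hx^2/a)$ and evaluating by stationary phase replaces the sum over $x$ by a sum over frequencies $m$ whose stationary point $x_m=ma/(2h)$ lies in $[0,b]$; each surviving term contributes a Fresnel factor of size $\approx\sqrt{a/(2h)}$, and crucially the pair $(a,h)$ is thereby replaced by a strictly smaller one. Iterating this transformation along the continued-fraction expansion of $h/a$ produces a telescoping series whose terms decay geometrically, and it is the sum of this geometric-type series, together with the Fresnel and boundary constants at each stage, that assembles into a clean bound of the shape $C\sqrt a$.

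The hard part will be the explicit constant. Each application of the transformation formula introduces a bounded multiplicative factor and a boundary/endpoint error term, and these must be controlled \emph{uniformly} in $h$, $a$, and $b$, then summed over the entire continued-fraction iteration without losing numerical sharpness; this is exactly the delicate optimization that forces the value $3.9071$ rather than a clean small constant. A secondary technical nuisance is the parity of $a$: for even $a$ the completed Gauss sums degenerate (vanishing, or carrying an extra factor $\sqrt2$) and the reciprocity formula requires the standard case-splitting modulo $4$ and $8$, which I would handle separately before recombining. Since the sharp statement is precisely Korolev's corollary, in practice I would simply invoke it; the sketch above is how one would reconstruct it from scratch.
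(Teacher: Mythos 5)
This lemma is quoted, not proved, in the paper --- it is Korolev's result, invoked as a black box --- so there is no internal proof to compare against, and your ultimate decision to ``simply invoke it'' coincides exactly with what the paper does. That said, your reconstruction sketch is a faithful outline of how such bounds are actually obtained. Your completion analysis is correct: the Dirichlet-kernel coefficients have $\ell^1$-norm $\asymp a\log a$ when $b\asymp a$, so completion caps out at $O(\sqrt a\,\log a)$, and the Parseval/Cauchy--Schwarz variant indeed only gives $\sqrt{ab}$. Your side remark is also right and worth making explicit: with only the lossy bound, the sum over $h$ against $\|h/a\|^{-1}$ in \eqref{e2.1} would pick up an extra logarithm and the first error term of Theorem~\ref{t1} would degrade to $\sqrt a(\log a)^2$, so a log-free bound is genuinely needed. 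And the route you propose for removing the logarithm --- Poisson summation/stationary phase swapping the pair $(a,h)$ for a smaller one, iterated along the continued-fraction expansion of $h/a$ with geometrically decaying contributions --- is precisely the classical Hardy--Littlewood-style theta reciprocity that underlies Korolev's explicit result. The one genuine gap, which you acknowledge, is that the sketch does not control the accumulated endpoint errors uniformly over the iteration, and so produces no explicit constant; that bookkeeping is the actual content of Korolev's paper. A useful observation that would have let you sidestep this entirely: the paper never uses the value $3.9071$. In the proofs of Theorems~\ref{t1} and~\ref{t2} the lemma enters only through
\[
S(h,a,b)\ll\left(\frac{b(a,h)}{a}+1\right)\sqrt{\frac{a}{(a,h)}},
\]
for which any absolute constant $C$ in $|S|\le C\sqrt a$ suffices. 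So a complete proof of what the paper actually needs could stop once your iteration yields $O(\sqrt a)$ with an unspecified constant, skipping the delicate numerical optimization.
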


It then follows immediately, by dissecting the range $[1,b]$ into blocks of length $a/(a,h)$ if necessary, that
\begin{align*}
S(h, a, b)=&\sum_{x \leq b} e \left(  \frac{h x^2 /(a,h)}{a/(a,h)} \right)\\
\ll& \left(\frac{b}{a/(a,h)}+1\right)\sqrt{\frac{a}{(a,h)}}\\
\ll&\frac{b}{\sqrt{a}}\sqrt{(a,h)}+\sqrt{\frac{a}{(a,h)}}.
\end{align*}
Hitherto it remains to estimate the sum
$$
\sum_{1 \leq h \leq a - 1} 
\frac{-1}{1 - e\left( - \frac{h}{a} \right)} S(h, a, b)\ll \sum_{1 \leq h \leq a - 1} 
\frac1{\|\frac{h}a\|} \left(\frac{b}{\sqrt{a}}\sqrt{(a,h)}+\sqrt{\frac{a}{(a,h)}}\right).
$$
We may split the latter sum into two sums $\displaystyle\sum_{1\le h<a/2}$ and $\displaystyle\sum_{a/2\le h<a}$, and will only treat the first case and note that the second is analogous. Thus
\begin{align}
  &   \sum_{h <a/2} 
\frac1{\|\frac{h}a\|} \left(\frac{b}{\sqrt{a}}\sqrt{(a,h)}+\sqrt{\frac{a}{(a,h)}}\right)\nonumber\\
=&\sum_{d|a}\sum_{\substack{k<\frac{a}{2d}\\(k,a/d)=1}} \frac{a}{kd} \left(\frac{b}{\sqrt{a}}\sqrt{d}+\sqrt{\frac{a}{d}}\right)\nonumber\\
\ll&a\log a\left(\frac{b}{\sqrt{a}}\sum_{d|a}\frac1{\sqrt{d}}+\sqrt{a}\right)\nonumber\\
\ll&a\left( \sqrt{a} \log a+ba^{-\frac12+\frac{2+o(1)}{\sqrt{\log a}\log\log a} }\right)\label{e2.2}, 
\end{align}
where in the last line the following bounds $$\sigma_{-\frac12}(a)=\sum_{d|a}\frac1{\sqrt{d}}\le \exp\left(\frac{(2+o(1))\sqrt{\log a}}{\log\log a}\right)$$
and
$$
\log a=\exp(\log\log a)\ll \exp\left(o\left(\frac{\sqrt{\log a}}{\log\log a}\right)\right)
$$
are used. For the former bound, see \cite[\S I.5.5 Theorem 5]{Ten}.

Therefore, we obtain from \eqref{e2.1} that
\[
\sum_{x \leq b}
\left( \frac{1}{2} - \left\{ \frac{x^2}{a} \right\} \right)=O\left( \sqrt{a} \log a+ba^{-\frac12+\frac{2+o(1)}{\sqrt{\log a}\log\log a} }\right),
\]
and Theorem~\ref{t1} follows immediately on noting that
\[
\sum_{x \leq b} \left\lfloor \frac{x^2}{a} \right\rfloor 
= 
\sum_{x \leq b} 
\left( \frac{x^2}{a} - \frac{1}{2} \right)
+
\sum_{x \leq b}
\left( \frac{1}{2} - \left\{ \frac{x^2}{a} \right\} \right).
\]

\section{The proof of Theorem~\ref{t2}}

Let  $J = \lfloor \delta a \rfloor$. 
Our goal is to count the number of integers $x \leq b$ such that $\left\|\frac{x^2}{a}\right\| \le \delta$. Note that $\left\|\frac{x^2}{a} \right\| \le \delta$ if and only if there exists an integer $k$, such that 
\[
k - \delta \le \frac{x^2}{a} \le k + \delta, 
\]
i.e.
\[
k a - \delta a \le x^2 \le k a + \delta a, 
\]
which happens if and only if $x^2 \equiv j \pmod{a}$ for some $|j| \leq J$. Therefore, by the orthogonality of additive characters, we have 
\begin{align*}
A(a, b, \delta) 
=& \sum_{\substack{x \leq b \\ \left\|\frac{x^2}{a}\right\| \leq \delta}}1 \\=& \sum_{|j| \leq J} \sum_{\substack{x \leq b\\x^2\equiv j\bmod a}} 1 \\
=& \sum_{|j| \leq J} \sum_{x \leq b} \frac{1}{a}
\sum_{h \bmod a} e\left( h \frac{x^2 - j}{a} \right) \\
=& \frac{1}{a}
\sum_{h \bmod{a}}
\sum_{|j| \leq J}
e\left( - \frac{h j}{a} \right)
\sum_{x \leq b}
e\left( \frac{h x^2}{a} \right).
\end{align*}
The term $h = 0$ contributes $(2 J + 1)\frac{b}a$ in the sum. When $h \neq 0$, since
\[
\sum_{|j| \leq J} e \left( - \frac{h j}{a} \right) \ll \left\|\frac{h}{a}\right\|^{-1}
\]
and 
\[
\sum_{x \leq b} e \left( \frac{h x^2}{a} \right) \ll \frac{b}{\sqrt{a}}\sqrt{(a,h)}+\sqrt{\frac{a}{(a,h)}}, 
\]
we have by \eqref{e2.2}
\begin{align*}
A(a, b, \delta) =& (2 J + 1)\frac{b}a+\sum_{1 \leq h \leq a - 1} 
\frac1{a\|\frac{h}a\|} \left(\frac{b}{\sqrt{a}}\sqrt{(a,h)}+\sqrt{\frac{a}{(a,h)}}\right)\\
=&2 \delta b + O\left( \sqrt{a} \log a+ba^{-\frac12+\frac{2+o(1)}{\sqrt{\log a}\log\log a} }\right). 
\end{align*}
This completes the proof of Theorem~\ref{t2}.

\section{The proofs of Theorem~\ref{t3} and Theorem~\ref{t4}}

To prove Theorem~\ref{t3}, we start by showing that
\[A(a,\delta)\ll \delta a(\log a)d(a) +\sigma(r), 
\]
where $r$ is the largest integer such that $r^2|a$. 

Let $H=\left\lfloor \frac{1}{2\delta}\right\rfloor$. Recall that the Fej\'{e}r kernel, defined as
$$
\mathcal{F}_H(t):=\sum_{h=-H}^H\frac{H-|h|}{H^2}e(ht),
$$
satisfies $\mathcal{F}_H(t)\ge \frac4{\pi^2}$ when $\|t\|\le \delta$ and $\mathcal{F}_H(t)\ge 0$ for all $t$.
Noting that
\[
A(a,\delta)
= \sum_{\substack{x \leq a \\ \left\| \frac{x^2}{a} \right\| \le \delta }} 1
= \sum_{\substack{x \leq a}} \mathbf{1}_{[0, \delta]} \left( \left\| \frac{x^2}{a} \right\| \right), \]
where $\mathbf{1}_{[0, \delta]}$ is the indicator function on $[0, \delta]$, 
we have
\[
A(a,\delta)
\ll
\sum_{x\le a} \mathcal{F}_H 
\left( \frac{x^2}{a}\right). 
\]
Therefore, we only need to estimate the sum $\displaystyle \sum_{x\le a} \mathcal{F}_H\left( \frac{x^2}{a}\right)$. 

First we single out the term with $h=0$ and obtain
\[
\sum_{x\le a} \mathcal{F}_H\left( \frac{x^2}{a}\right)
\ll \delta a+E(a), 
\]
where 
$$E(a)\ll \sum_{h=1}^H\frac{H-h}{H^2}\sum_{x=1}^ae\left(\frac{hx^2}a\right).$$
We denote the following complete Gauss sums by 
$$S(h,a):=\sum_{x=1}^ae\left(\frac{hx^2}a\right).$$ 
Then we have 
\begin{align*}
E(a)\ll &\sum_{h=1}^H\frac{H-h}{H^2}(h,a)S\left(\frac{h}{(h,a)},\frac{a}{(h,a)} \right)\\
\ll& \sum_{d|a}d\sum_{\substack{h\le H\\(h,a)=d}}\frac{H-h}{H^2}S\left(\frac{h}{d},\frac{a}{d}\right) \\
\ll& \sum_{d \mid a} d
\sum_{\substack{h_1\le H/d\\(h_1,a/d)=1}}\frac{H-h_1 d}{H^2}S\left(h_1, \frac{a}{d} \right).
\end{align*}
For a fixed $d \mid a$, let $a_1=a/d$.  To apply partial summation to the inner sum in the last line, we focus on the partial sum 
\[
S(N) = \sum_{\substack{h_1\le N \\(h_1,a_1)=1}}
S(h_1,a_1). 
\]
Compared to the incomplete Gauss sums $S(h,a,b)$ considered in \S \ref{s2},  the complete Gauss sums $S(h,a)$ are very well understood. Actually, we know their exact values.
\begin{lemma}[{\cite[\S3.5]{IK}}]\label{l2}
Suppose $(h,a)=1$. Then
\[
S(h,a)=
\left\{
\begin{array}{ll}
0,&\text{when } a\equiv2\pmod{4},\\
\varepsilon_a\legendre{h}{a}\sqrt{a},&\text{when } a \text{ is odd},\\
(1+i)\varepsilon_h^{-1}\legendre{a}{h}\sqrt{a},&\text{when } h \text{ is odd and }4|a, \\
\end{array}
\right.
\]
where 
\[
\varepsilon_m=
\left\{
\begin{array}{ll}
1,&\text{when }m\equiv1\pmod{4}, \\
i,&\text{when }m\equiv 3\pmod{4}, 
\end{array}
\right.
\]
and $\legendre{*}{*}$ is the Jacobi symbol. 
\end{lemma}
Note that $\legendre{*}{a}$ is a Dirichlet character modulo $a$ and $\legendre{a}{*}$ is a Dirichlet character of conductor $a'|4a$.

In order to achieve the desired bound, we need to exploit  the cancellation arising from the character sum over $h$. To that end, we apply the  P\'{o}lya-Vinogradov inequality.

\begin{lemma}[{\cite[Theorem 9.18]{MontgometryVaughanbook}}]\label{l3}
Let $\chi$ be an non-principal Dirichlet character modulo $a$. We have 
\[
\left| \sum_{M < n \le M + N} \chi(n) \right| \le \sqrt{a} \log a. 
\]
\end{lemma}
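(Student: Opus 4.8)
The plan is to establish this by the classical Fourier-analytic argument through Gauss sums, which fits naturally with the tools already developed in \S\ref{s2}. First I would reduce to the case of a primitive character: a non-principal $\chi$ modulo $a$ is induced by a primitive character of some conductor $q\mid a$ with $q>1$, so after taking care of the coprimality condition $(n,a)=1$ it suffices to bound the sum attached to the primitive inducing character and then use $\sqrt{q}\log q\le\sqrt{a}\log a$. Thus assume $\chi$ is primitive modulo $q$.

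Next I would invoke the separability of Gauss sums: for primitive $\chi$ one has the identity $\chi(n)=\tau(\overline{\chi})^{-1}\sum_{m\bmod q}\overline{\chi}(m)e(mn/q)$, valid for every integer $n$, together with $|\tau(\overline{\chi})|=\sqrt{q}$. Substituting this into the character sum and interchanging the order of summation, the term $m\equiv 0$ drops out because $\overline{\chi}(0)=0$, leaving
\[
\sum_{M<n\le M+N}\chi(n)=\frac{1}{\tau(\overline{\chi})}\sum_{m=1}^{q-1}\overline{\chi}(m)\sum_{M<n\le M+N}e\!\left(\frac{mn}{q}\right).
\]
The inner sum is a geometric progression, so its modulus is at most $2\bigl|1-e(m/q)\bigr|^{-1}=1/\sin(\pi\|m/q\|)\le 1/(2\|m/q\|)$, exactly the type of bound already exploited in \S\ref{s2}. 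Combined with $|\overline{\chi}(m)|\le 1$ and $|\tau(\overline{\chi})|=\sqrt{q}$, this yields
\[
\left|\sum_{M<n\le M+N}\chi(n)\right|\le\frac{1}{\sqrt{q}}\sum_{m=1}^{q-1}\frac{1}{2\|m/q\|}.
\]

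Finally I would evaluate the remaining sum. Writing $\|m/q\|=\min(m,q-m)/q$ and pairing $m$ with $q-m$ reduces it to a harmonic sum of the shape $q\sum_{1\le k\le q/2}k^{-1}=q\log q+O(q)$; dividing by $\sqrt{q}$ then recovers the main term $\sqrt{q}\log q$. The step I expect to require the most care is extracting the clean constant $1$ in front of $\sqrt{q}\log q$ rather than merely $O(\sqrt{q}\log q)$: the crude bound $1/(2\|m/q\|)$ loses a little, so one should instead retain $1/\sin(\pi\|m/q\|)$ and estimate $\sum_{m}1/\sin(\pi m/q)$ more carefully (symmetrizing and using the sharper form of the harmonic sum) in order to absorb the lower-order contributions. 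The secondary technical point is the reduction from the modulus $a$ to its conductor $q$, where the coprimality restriction must be reinstated by a standard sieving step before the inequality $\sqrt{q}\log q\le\sqrt{a}\log a$ delivers the stated bound.
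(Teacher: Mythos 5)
The paper never proves this lemma: it is imported verbatim from Montgomery--Vaughan (Theorem 9.18 there) and used only up to an implied constant inside the proof of Lemma~\ref{l4}, so there is no internal proof to compare against. What you have written is a reconstruction of the classical P\'{o}lya--Vinogradov argument, which is indeed how the cited source proves it: expand a primitive $\chi$ through the Gauss sum identity $\tau(\overline{\chi})\chi(n)=\sum_{m \bmod q}\overline{\chi}(m)e(mn/q)$, bound the geometric progression by $1/\sin(\pi\|m/q\|)$, and sum over $m$. Your closing remark is exactly the right technical point: the crude bound $1/(2\|m/q\|)$ only yields $\sqrt{q}\log q+O(\sqrt{q})$, whereas keeping the cosecant and using $\sum_{m=1}^{q-1}\csc(\pi m/q)\sim\frac{2}{\pi}\,q\log q$ gives the constant $\frac{2}{\pi}<1$ and hence the clean inequality for primitive characters.

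The one genuine soft spot is your opening reduction to the primitive case. If $\chi$ mod $a$ is induced by the primitive $\chi^*$ mod $q$ and $k$ denotes the product of the primes dividing $a$ but not $q$, then reinstating the condition $(n,a)=1$ by M\"obius sieving gives
\[
\sum_{M<n\le M+N}\chi(n)=\sum_{d\mid k}\mu(d)\chi^*(d)\sum_{M/d<m\le (M+N)/d}\chi^*(m),
\]
and bounding each inner sum by $\sqrt{q}\log q$ produces $2^{\omega(k)}\sqrt{q}\log q$, which is \emph{not} bounded by $\sqrt{a}\log a$ in general (take $q$ a large prime and $a=6q$: then $2^{\omega(k)}=4$ and $4\sqrt{q}\log q>\sqrt{6q}\log(6q)$). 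So the constant $1$ does not survive this step, and your inequality $\sqrt{q}\log q\le\sqrt{a}\log a$ is not the whole story. To be fair, this is less your error than the paper's: Montgomery--Vaughan's Theorem 9.18 is stated for \emph{primitive} characters, and the paper's restatement for arbitrary non-principal characters modulo $a$ is a slight over-quotation. Since the lemma is only ever invoked with $\ll$, and $2^{\omega(k)}\ll\sqrt{k}$ with an absolute constant while $qk\le a$, your sieving loses at most a bounded factor and everything the paper needs ($\ll\sqrt{a}\log a$ for the possibly imprimitive characters appearing in Lemma~\ref{l4}) survives; but if you want the inequality with constant $1$ as literally stated, you must either restrict to primitive characters or do genuinely more work in the imprimitive case.
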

Next, we prove the following lemma on the partial sum $S(N)$. 
\begin{lemma}\label{l4} 
We have 
$$
S(N)=\sum_{\substack{h_1\le N\\(h_1,a_1)=1}}S(h_1,a_1) \ll
\left\{
\begin{array}{ll}
N\sqrt{a_1}, &\text{if } a_1 \text{ is a square},\\
a_1\log a_1, &\text{otherwise}.
\end{array}
\right.
$$
\end{lemma}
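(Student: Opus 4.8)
The plan is to feed the exact evaluations of Lemma~\ref{l2} into $S(N)$, turning it into a partial sum of a real Dirichlet character in $h_1$ weighted by $\sqrt{a_1}$; the whole lemma then reduces to deciding when that character is principal, in which case we lose all cancellation, versus non-principal, in which case P\'olya--Vinogradov applies. Two cases are immediate. If $a_1\equiv2\pmod4$, then $S(h_1,a_1)=0$ for every $h_1$ with $(h_1,a_1)=1$, so $S(N)=0$ and there is nothing to prove. If $a_1$ is a perfect square, I would simply invoke the triangle inequality: each of the three formulas in Lemma~\ref{l2} gives $|S(h_1,a_1)|\le\sqrt{2a_1}$, and there are at most $N$ terms in the sum, whence $S(N)\ll N\sqrt{a_1}$ with no cancellation required. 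This already yields the first branch of the lemma.

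It remains to treat the case where $a_1$ is not a square, where the trivial bound $N\sqrt{a_1}$ must be sharpened to $a_1\log a_1$ by exploiting cancellation among the character values. When $a_1$ is odd, Lemma~\ref{l2} gives $S(h_1,a_1)=\varepsilon_{a_1}\legendre{h_1}{a_1}\sqrt{a_1}$, so that $S(N)=\varepsilon_{a_1}\sqrt{a_1}\sum_{h_1\le N}\legendre{h_1}{a_1}$, the condition $(h_1,a_1)=1$ being automatically encoded by the vanishing of the Jacobi symbol. The character $h_1\mapsto\legendre{h_1}{a_1}$ is principal exactly when every prime dividing $a_1$ occurs to an even exponent, i.e. exactly when $a_1$ is a square; since $a_1$ is not a square it is non-principal, and Lemma~\ref{l3} yields $\sum_{h_1\le N}\legendre{h_1}{a_1}\ll\sqrt{a_1}\log a_1$, whence $S(N)\ll a_1\log a_1$.

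The main work, and the step I expect to be the chief obstacle, is the case $4\mid a_1$, where Lemma~\ref{l2} gives $S(h_1,a_1)=(1+i)\varepsilon_{h_1}^{-1}\legendre{a_1}{h_1}\sqrt{a_1}$ and the nuisance is the factor $\varepsilon_{h_1}^{-1}$, which depends on $h_1\bmod4$. I would absorb it by linearizing $\varepsilon_{h_1}^{-1}=\tfrac{1-i}{2}+\tfrac{1+i}{2}\legendre{-1}{h_1}$, a direct check on the residues $h_1\equiv1,3\pmod4$, so that $S(N)$ becomes a fixed linear combination of $\sum_{h_1\le N}\legendre{a_1}{h_1}$ and $\sum_{h_1\le N}\legendre{-1}{h_1}\legendre{a_1}{h_1}=\sum_{h_1\le N}\legendre{-a_1}{h_1}$. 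Both $\legendre{a_1}{\,\cdot\,}$ and $\legendre{-a_1}{\,\cdot\,}$ are real characters of conductor dividing $4a_1$. The first is non-principal because $a_1$ is not a square; the decisive observation is that the second is \emph{always} non-principal, since $\legendre{m}{\,\cdot\,}$ is principal only when $m$ is a perfect square and $-a_1<0$ never is. Applying Lemma~\ref{l3} to each sum, viewed as a character modulo $4a_1$ at the cost of a harmless constant, bounds both by $\ll\sqrt{a_1}\log a_1$, so that again $S(N)\ll a_1\log a_1$. The only residual fuss, namely passing between the possibly imprimitive Kronecker symbol modulo $4a_1$ and the primitive character inducing it so that Lemma~\ref{l3} is literally applicable, is routine and affects only the implied constants.
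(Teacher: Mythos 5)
Your proof is correct and follows essentially the same route as the paper: trivial bound via Lemma~\ref{l2} when $a_1$ is a square, and otherwise the decomposition $\varepsilon_{h_1}^{-1}=\frac{1-i}{2}+\frac{1+i}{2}\legendre{-1}{h_1}$ (identical to the paper's $\frac{1-i}2\chi_{{}_{\scriptstyle0}}+\frac{1+i}2\chi_{{}_{\scriptstyle1}}$ on odd integers) followed by P\'olya--Vinogradov applied to the resulting non-principal real characters of modulus dividing $4a_1$. Your explicit justification that $\legendre{a_1}{\,\cdot\,}$ is non-principal precisely when $a_1$ is not a square, and that $\legendre{-a_1}{\,\cdot\,}$ is always non-principal since $-a_1<0$, is a point the paper asserts without argument, so this is a welcome bit of added detail rather than a deviation.
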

\begin{proof}

If $a_1$ is a square, then by Lemma~\ref{l2}, we have
$$
S(N) \ll\sum_{h_1\le N}\sqrt{a_1}=N\sqrt{a_1}.
$$

Next we treat the case when $a_1$ is not a square. It is readily verified that 

$$
\varepsilon_m^{-1}=\frac{1-i}2\chi_{{}_{\scriptstyle0}}(m)+\frac{1+i}2\chi_{{}_{\scriptstyle1}}(m), 
$$
where $\chi_{{}_{\scriptstyle0}}$ is the principal character modulo 4, and $\chi_{{}_{\scriptstyle1}}$ is the quadratic character modulo 4, i.e.
$$
\chi_{{}_{\scriptstyle1}}(n)=\begin{cases}
0,& \text{if } 2|n, \\
1,&\text{if } n \equiv1\pmod{4}, \\
-1,&\text{if } n\equiv3\pmod{4}.
\end{cases}
$$

If  $a_1$ is odd, let $\chi=\legendre{*}{a_1}$; if $a_1$ is even, let $\chi=\chi_{{}_{\scriptstyle0}}\legendre{a_1}{*}$ or $\chi=\chi_{{}_{\scriptstyle1}}\legendre{a_1}{*}$. Note that in any case, $\chi$ is always a non-principal character of modulus at most $4a_1$. Therefore, we may apply the P\'{o}lya-Vinogradov inequality (Lemma~\ref{l3}) to the character sum $ \sum_{h_1\le N}\chi(h_1)$ after another application of Lemma~\ref{l2}, and obtain 
$$
S(N)\ll a_1\log a_1.
$$
\end{proof}
By Lemma~\ref{l4} and partial summation, we obtain
$$
\sum_{\substack{h_1\le H/d\\(h_1,a_1)=1}}\frac{H-h_1d}{H^2}S(h_1,a_1)
\ll
\left\{
\begin{array}{ll}
\frac{\sqrt{a_1}}{d}, &\text{if } a_1 \text{ is a square},\\
\frac{a_1\log a_1}{H}, &\text{otherwise}.
\end{array}
\right.
$$
Therefore, recalling that $r$ is the largest integer such that $r^2 \mid a$
and $H=\left\lfloor \frac{1}{2\delta}\right\rfloor$, we have 
\begin{align*}
E(a)\ll &\sum_{\substack{d|a\\a_1 = \frac{a}{d}=\square}}
d \frac{\sqrt{a_1}}{d}
+\sum_{\substack{d|a\\a_1 = \frac{a}{d} \neq \square}}d \frac{a_1 \log a_1}{H}\\
\ll & 
\sum_{\substack{a_1|a\\a_1=\square}} \sqrt{a_1}
+\frac{a}{H}\sum_{\substack{a_1|a\\a_1\neq\square}}\log a\\
\ll&\sigma(r)+\delta a(\log a)d(a).
\end{align*}

This completes the proof of Theorem~\ref{t3}. 
Theorem~\ref{t4} follows on using the same argument as that used to prove Theorem~\ref{t3}. The only difference is that we use the following statement in place of Lemma~\ref{l3}. 
\begin{lemma}[{\cite[Theorem 2]{MontgometryVaughan1977}}]\label{l5}
Let $\chi$ be an non-principal Dirichlet character modulo $a$. Suppose that the Generalized Riemann Hypothesis holds true, then 
\[
\left| \sum_{M < n \le M + N} \chi(n) \right| \le \sqrt{a} \log\log 3a. 
\]
\end{lemma}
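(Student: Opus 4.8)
The plan is to prove the bound first for primitive characters and then descend to the general modulus. Writing $\chi$ as induced by a primitive character of conductor $q\mid a$ and inserting $\mathbf 1_{(n,a)=1}=\sum_{d\mid(n,a)}\mu(d)$, the incomplete sum $\sum_{M<n\le M+N}\chi(n)$ becomes a short combination of incomplete sums of the primitive character over dilated ranges; here some care is needed with the multiplicative constants so as not to spoil the clean shape $\sqrt a\,\log\log 3a$, but this descent is standard once the primitive case is in hand. I therefore assume $\chi$ primitive modulo $q$ and aim for $\big|\sum_{n\le x}\chi(n)\big|\ll\sqrt q\,\log\log 3q$.

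First I would invoke Pólya's Fourier expansion: for primitive $\chi$ modulo $q$, any $x$, and a truncation parameter $N$,
\[
\sum_{n\le x}\chi(n)=\frac{\tau(\chi)}{2\pi i}\sum_{0<|n|\le N}\frac{\bar\chi(n)}{n}\Big(1-e\big(-\tfrac{nx}{q}\big)\Big)+O\Big(1+\frac{q\log q}{N}\Big),
\]
where $\tau(\chi)$ is the Gauss sum, with $|\tau(\chi)|=\sqrt q$. Taking $N\asymp q$ renders the error $O(\log q)$, which is negligible against the target. Since $|\tau(\chi)|=\sqrt q$ factors out, the entire problem collapses to the uniform estimate
\[
\sum_{0<|n|\le N}\frac{\bar\chi(n)}{n}\,e(n\theta)\ll\log\log 3q
\]
valid for every real $\theta$; only the instances $\theta=0$ and $\theta=-x/q$ are actually needed.

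The crux, and the step where GRH is indispensable, is this twisted truncated $L$-series bound. For $\theta=0$ the sum is a partial sum of $L(1,\bar\chi)$, and I would run Littlewood's conditional argument: expand $\log L(s,\bar\chi)$ as a prime-power Dirichlet series, use that GRH places every nontrivial zero on $\Re s=\tfrac12$ to bound $\log|L(s,\bar\chi)|\ll\log\log 3q$ for $\Re s\ge 1$, and shift the contour in a smoothed Perron formula to $\Re s=\tfrac12+1/\log q$; this yields $|L(1,\bar\chi)|\ll\log\log 3q$ together with the same bound for all its partial sums. For the rational twist $\theta=-x/q$ I would convert the additive character into multiplicative ones through the Gauss-sum identity, expressing $\sum_n\frac{\bar\chi(n)}{n}e(n\theta)$ as an average over characters $\psi\bmod q$ of partial sums of $L(1,\psi)$-type series, each again $\ll\log\log 3q$ under GRH by the same Littlewood estimate.

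The main obstacle I anticipate is making this additive-to-multiplicative passage uniform without leaking an extra logarithm: the Gauss-sum expansion naively produces a full sum over $\psi\bmod q$ of individually bounded terms, which threatens a $(\log\log 3q)^2$ or even $\log q$ loss. The correct remedy, and the technical core of Montgomery and Vaughan's argument, is to treat the single twisted series directly by absorbing the additive character $e(n\theta)$ into the test function of a smoothed Perron integral, so that the zero information supplied by GRH remains the sole source of the $\log\log$ factor and the constant stays genuinely linear in $\log\log 3q$. This uniform-in-$\theta$ control is precisely what upgrades the classical Pólya–Vinogradov bound $\sqrt q\,\log q$ to the conditional $\sqrt q\,\log\log 3q$.
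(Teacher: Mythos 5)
First, a remark on the comparison itself: the paper does not prove this lemma at all --- it is quoted verbatim from Montgomery and Vaughan (1977, Theorem 2) --- so your attempt must be measured against their proof. Your opening moves are sound: the descent to primitive characters is indeed routine, and Pólya's expansion with $N\asymp q$ correctly reduces everything to the uniform estimate $\sum_{0<|n|\le N}\bar\chi(n)n^{-1}e(n\theta)\ll\log\log 3q$ for all real $\theta$. The genuine gap is in how you propose to prove that uniform estimate. Littlewood's conditional argument works because $L(s,\bar\chi)$ has an Euler product, so that GRH controls $\log L(s,\bar\chi)$ through sums over primes; the moment you insert the additive twist $e(n\theta)$, the series $\sum_n\bar\chi(n)e(n\theta)n^{-s}$ becomes a Lerch-type series with \emph{no Euler product}, and GRH gives no control whatsoever over its size or its behaviour in the critical strip. ``Absorbing $e(n\theta)$ into the test function of a smoothed Perron integral'' does not circumvent this: any contour-shift argument still needs analytic information about the twisted series itself, which is precisely what is unavailable. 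Similarly, for $\theta=-x/q$ the Gauss-sum expansion you suggest runs over all $\phi(q)$ characters modulo $q$, and no smoothing repairs that loss; the relevant feature of $\theta$ is not that it is rational with denominator $q$, but how well it is approximated by rationals with \emph{small} denominator.

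This is exactly where the Montgomery--Vaughan proof diverges from your plan, and where its real content lies. They apply Dirichlet's approximation theorem to write $\theta$ as close to $a/m$ and split according to the size of $m$. When $m$ is large (minor arcs), they use their unconditional Theorem 1 --- a bound for $\sum_{n\le x}f(n)e(n\theta)$ with $f$ multiplicative and $|f|\le1$, proved by bilinear (Vinogradov-type) decompositions and the large sieve --- combined with partial summation to gain the factor $1/n$; GRH plays no role on the minor arcs. Only when $m$ is small (major arcs) do they expand $e(na/m)$ into multiplicative characters modulo the \emph{small} modulus $m$, reducing the problem to partial sums of $L(1,\bar\chi\psi)$ with $\psi$ of small conductor, where Littlewood's GRH bound $\ll\log\log q$ legitimately applies and the number of characters involved is harmless. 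Your proposal omits the minor-arc machinery entirely and asks GRH to do work it cannot do, so the central step of the argument is missing rather than merely technical.
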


\proof[Acknowledgement]
The authors are grateful to the anonymous referee for helpful suggestions and comments. 

\bibliographystyle{plain}
 \bibliography{biblio}

\begin{thebibliography}{10}

\bibitem{B}
Victor Beresnevich.
\newblock Rational points near manifolds and metric {D}iophantine
  approximation.
\newblock {\em Ann. of Math. (2)}, 175(1):187--235, 2012.

\bibitem{BDV}
Victor Beresnevich, Detta Dickinson, and Sanju Velani.
\newblock Diophantine approximation on planar curves and the distribution of
  rational points.
\newblock {\em Ann. of Math. (2)}, 166(2):367--426, 2007.

\bibitem{Beresnevich}
Victor Beresnevich, Felipe Ram\'{\i}rez, and Sanju Velani.
\newblock Metric {D}iophantine approximation: aspects of recent work.
\newblock In {\em Dynamics and analytic number theory}, volume 437 of {\em
  London Math. Soc. Lecture Note Ser.}, pages 1--95. Cambridge Univ. Press,
  Cambridge, 2016.

\bibitem{BourgainWatt}
Jean {Bourgain} and Nigel {Watt}.
\newblock {Mean square of zeta function, circle problem and divisor problem
  revisited}.
\newblock {\em arXiv e-prints}, page arXiv:1709.04340, Sep 2017.

\bibitem{Burgess}
D.~A. Burgess.
\newblock On character sums and {$L$}-series. {II}.
\newblock {\em Proc. London Math. Soc. (3)}, 13:524--536, 1963.

\bibitem{ChamizoPastor}
Fernando Chamizo and Carlos Pastor.
\newblock Lattice points in elliptic paraboloids.
\newblock {\em Publ. Mat.}, 63(1):343--360, 2019.

\bibitem{H1}
Jing-Jing Huang.
\newblock Rational points near planar curves and {D}iophantine approximation.
\newblock {\em Adv. Math.}, 274:490--515, 2015.

\bibitem{H2}
Jing-Jing Huang.
\newblock {The density of rational points near hypersurfaces}.
\newblock {\em arXiv e-prints}, arXiv:1711.01390, November 2017.

\bibitem{Huang}
Jing-Jing Huang.
\newblock {Diophantine approximation on the parabola with non-monotonic
  approximation functions}.
\newblock {\em Math. Proc. Cambridge Philos. Soc.}, in press. arXiv:1802.00525,
  February 2018.

\bibitem{H3}
Jing-Jing Huang.
\newblock Integral points close to a space curve.
\newblock {\em Math. Ann.}, 374(3-4):1987--2003, 2019.

\bibitem{H4}
Jing-Jing {Huang} and Jason~J. {Liu}.
\newblock {Simultaneous approximation on affine subspaces}.
\newblock {\em Int. Math. Res. Not. IMRN}, in press. arXiv:1811.06531, November
  2018.

\bibitem{Hux1}
M.~N. Huxley.
\newblock Integer points, exponential sums and the {R}iemann zeta function.
\newblock In {\em Number theory for the millennium, {II} ({U}rbana, {IL},
  2000)}, pages 275--290. A K Peters, Natick, MA, 2002.

\bibitem{Hux2}
M.~N. Huxley.
\newblock Exponential sums and lattice points. {III}.
\newblock {\em Proc. London Math. Soc. (3)}, 87(3):591--609, 2003.

\bibitem{IK}
Henryk Iwaniec and Emmanuel Kowalski.
\newblock {\em Analytic number theory}, volume~53 of {\em American Mathematical
  Society Colloquium Publications}.
\newblock American Mathematical Society, Providence, RI, 2004.

\bibitem{Korolev}
M.~A. Korolev.
\newblock On incomplete {G}aussian sums.
\newblock {\em Proc. Steklov Inst. Math.}, 290(1):52--62, 2015.

\bibitem{MontgometryVaughan1977}
H.~L. Montgomery and R.~C. Vaughan.
\newblock Exponential sums with multiplicative coefficients.
\newblock {\em Invent. Math.}, 43(1):69--82, 1977.

\bibitem{MontgometryVaughanbook}
Hugh~L. Montgomery and Robert~C. Vaughan.
\newblock {\em Multiplicative number theory. {I}. {C}lassical theory},
  volume~97 of {\em Cambridge Studies in Advanced Mathematics}.
\newblock Cambridge University Press, Cambridge, 2007.

\bibitem{Popov}
V.~N. Popov.
\newblock The number of lattice points under a parabola.
\newblock {\em Mat. Zametki}, 18(5):699--704, 1975.

\bibitem{Ten}
G\'{e}rald Tenenbaum.
\newblock {\em Introduction to analytic and probabilistic number theory},
  volume 163 of {\em Graduate Studies in Mathematics}.
\newblock American Mathematical Society, Providence, RI, third edition, 2015.

\bibitem{VV}
R.~C. Vaughan and S.~Velani.
\newblock Diophantine approximation on planar curves: the convergence theory.
\newblock {\em Invent. Math.}, 166(1):103--124, 2006.

\end{thebibliography}

\end{document}